\newtheorem{theorem}{Theorem}
\newtheorem{corollary}{Corollary}
\newtheorem{lemma}{Lemma}
\title{Sets with no solutions to some symmetric linear equations}
\author{Tomasz Kościuszko}
\begin{document}
\begin{abstract}
We expand the class of linear symmetric equations for which large sets with no non-trivial solutions are known. Our idea is based on first finding a small set with no solutions and then enlarging it to arbitrary size using a multi-dimensional construction, crucially assuming the equation in primitive. We start by presenting the technique on some new equations. Then we use it to show that a symmetric equation with randomly chosen coefficients has a near-optimal set with no non-trivial solutions. We also show a construction for a wide class of symmetric equation in 6 variables. In the final section we present a couple of remarks on non-symmetric equations.
\end{abstract}
\maketitle
\section{Introduction}
Ruzsa in his celebrated paper~\cite{ruzsa} laid the groundwork for analyzing linear equations in integers. Ruzsa suspected that if $A\subseteq [N]=\{1,2,\cdots, N\}$ is the largest set, which does not contain non-trivial solutions to an equation of genus $g$ (we define genus in the next section), then its size is roughly $N^{1/g}$. He showed the conjectured upper bound $O(N^{1/g})$, however, the question remains wide open. Interestingly, as noted in~\cite{gowers}(Proposition 4.14), a positive resolution to the $(9,5)$-free problem in the theory of hypergraphs would give a negative answer to Ruzsa's question.

In this paper we make progress on constructing large sets without solutions to equations of the form
\begin{equation}a_1x_1 + a_2x_2 + \cdots a_k x_k = a_1x_1' + a_2x_2' + \cdots a_k x_k',\tag{1}\end{equation}
where $a_i$ are integers and $x_i,x_i'$ come from a subset $A$ of $[N]$. This is a symmetric linear equation and thus its genus is equal to $k$.
Ruzsa, for any such equation with $k=2$ constructed a set $|A|=\Omega(N^{1/2})$ without non-trivial solutions. In fact, he gives a construction to a wider class of equations of genus 2, but it does not seem to carry-over to equations longer than $4$ variables. He also suggested a greedy algorithm which for any equation of length $m$ gives a set with no non-trivial solutions of size $|A|=\Omega(N^{1/(m-1)})$. In this paper we focus on the lower-bound for symmetric equations in the cases where we can improve upon the bound given by the greedy algorithm. In Section 2 we introduce our technique. It gives near-optimal lower-bounds for some specific equations (Corollary~\ref{cor:two_var}, Corollary~\ref{cor:coprime}) but also for almost any other equation as expressed by Theorem~\ref{thm:random}, which is proven in Section 3.
\begin{theorem}\label{thm:random}
    Let $k\geq 2$ and $\epsilon>0$, then there exists $C_{\epsilon}$, such that for all $C\geq C_{\epsilon}$, there exist at least $(1-\epsilon)C^k$ such $k$-tuples $(a_1, a_2, \cdots, a_k)\in [1, C]^k$ for which the equation
    $$a_1x_1+a_2x_2+\cdots+a_kx_k=a_1x_1'+a_2x_2'+\cdots+a_kx_k'$$
    has no non-trivial solutions in the set $\{1,2,\cdots, \lfloor C^{1/k-\epsilon}\rfloor\}$. Consequently, within the set of $N$ first integers there exists a subset $A$ of size $\Omega(N^{1/k-\epsilon})$, which contains no non-trivial solution to this equation.
\end{theorem}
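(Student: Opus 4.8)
The plan is to combine a first-moment computation with a multi-dimensional amplification (the technique of Section~2). In the first stage I would show that for all but $\epsilon C^k$ of the tuples $(a_1,\dots,a_k)\in([1,C]\cap\mathbb Z)^k$ the \emph{entire} interval $S=\{1,\dots,M\}$ with $M=\lfloor C^{1/k-\epsilon}\rfloor$ is already \emph{rigid}: the equation $\sum_i a_ix_i=\sum_i a_ix_i'$ with all $x_i,x_i'\in S$ forces $x_i=x_i'$ for every $i$ (so a rigid tuple has no non-trivial solution in $S$). In the second stage I would convert such a rigid interval into a subset of $\{1,\dots,N\}$ of size $\Omega(N^{1/k-\epsilon})$ with no non-trivial solution, tying the two parameters together by taking $C$ to be a suitable fixed power of $N$.

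For the first stage, if $(a_i)$ is not rigid then $\sum_i a_id_i=0$ for some nonzero $d\in\mathbb Z^k$ with $\|d\|_\infty\le M-1$, namely $d=x-x'$. For each fixed such $d$, picking $j$ with $d_j\ne0$ shows $a_j$ is determined by the other coordinates, so at most $C^{k-1}$ tuples lie on $\sum_i a_id_i=0$. Since there are fewer than $(2M)^k$ admissible $d$, the number of non-rigid tuples is at most $(2M)^kC^{k-1}\le 2^kC^{1-k\epsilon}C^{k-1}=2^kC^{k(1-\epsilon)}$, which is $<\epsilon C^k$ once $C^{k\epsilon}>2^k/\epsilon$; choosing $C_\epsilon$ accordingly (and large enough to absorb the gap between $\lfloor C\rfloor^k$ and $C^k$) gives the stated count.

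For the second stage I fix a rigid tuple. Since the equation acts coordinatewise, $S^r\subseteq\mathbb Z^r$ is rigid for the vector equation $\sum_i a_i\mathbf x_i=\sum_i a_i\mathbf x_i'$. I would then find, by a first-moment argument, a linear functional $\phi(\mathbf v)=\sum_{j=1}^r\lambda_jv_j$ with $\lambda_j\in\{1,\dots,L\}$ that does not vanish on any nonzero vector all of whose coordinates lie in $W:=\{\sum_i a_iz_i:z\in\{-(M-1),\dots,M-1\}^k\}\cup\{-(M-1),\dots,M-1\}$. The crucial point is that although the elements of $W$ can be as large as $\sum_i a_i\cdot M\asymp CM\asymp M^{k+1}$, the set $W$ is, apart from the short interval, a $k$-generated generalized arithmetic progression, so $|W|\le 2^{k+1}M^k$; hence, counting over the at most $|W|^r$ relevant nonzero vectors (each killing at most $L^{r-1}$ functionals), such a $\phi$ exists as soon as $L>|W|^r$, i.e.\ $L=\Theta_{k,r}(M^{kr})$. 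With $A=\phi(S^r)$: non-vanishing on short-interval vectors makes $\phi$ injective on $S^r$, so $|A|=M^r$; and if $\sum_i a_i\phi(\mathbf s_i)=\sum_i a_i\phi(\mathbf s_i')$, then $\phi$ annihilates $\sum_i a_i\mathbf s_i-\sum_i a_i\mathbf s_i'$, whose coordinates lie in $W$, so that vector is $0$ and rigidity of $S^r$ forces $\mathbf s_i=\mathbf s_i'$ for all $i$; thus $A$ has no non-trivial solution, and $A\subseteq\{1,\dots,rLM\}$.

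Finally I would fix the parameters: given $N$, set $r=\lceil(1-k\epsilon)/(k^2\epsilon)\rceil+1$, $\delta=1/r$, $C=\lceil N^{\delta}\rceil$ (so $M\asymp N^{\delta(1/k-\epsilon)}$, and $C\ge C_\epsilon$ for $N$ large), and $L$ just above $|W|^r$. Then $\delta r=1$ gives $|A|=M^r\asymp N^{1/k-\epsilon}$, while $rLM\asymp M^{kr+1}\asymp N^{\delta(1/k-\epsilon)(kr+1)}$ has exponent $(1/k-\epsilon)(k+1/r)$, which is $<1$ exactly because $r>(1-k\epsilon)/(k^2\epsilon)$, so $A\subseteq\{1,\dots,N\}$ for $N$ large; running the construction on each of the $(1-\epsilon)C^k$ rigid tuples supplied by the first stage completes the argument. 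The step I expect to be the real obstacle is precisely this balancing act: one must resist the temptation to embed $S^r$ digit-by-digit in a fixed base — which would need a base $\asymp CM$ and yield only the exponent $1/(k+1)$ — and instead use a generic projection whose cost is governed by the \emph{number} $|W|\le 2^{k+1}M^k$ of possible difference values rather than by their size, all while verifying (via rigidity and the annihilation property of $\phi$) that this much cheaper projection still destroys every non-trivial solution.
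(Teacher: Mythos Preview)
Your first-moment count matches the paper's preliminary lemma exactly. The amplification step is where you diverge: the paper simply feeds $A_L=\{0,\dots,\lfloor C^{1/(k-1)-\epsilon}\rfloor\}$ into the base-$L$ digit construction of Lemma~\ref{lem:base} with $L=kC\lfloor C^{1/(k-1)-\epsilon}\rfloor$ and checks that $\log|A_L|/\log L\ge 1/k-\epsilon$. You instead keep $M=\lfloor C^{1/k-\epsilon}\rfloor$, lift to $S^r$, and project by a generic linear functional whose coefficients need only exceed $|W|^r$ with $|W|\le 2^{k+1}M^k$ --- not the naive range $(kCM)^r$ --- so that $A=\phi(S^r)$ lands in $[1,\Theta_{k,r}(M^{kr+1})]$ with $|A|=M^r$ and exponent $r/(kr+1)\to 1/k$ as $r\to\infty$.

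Your diagnosis of the obstacle is correct: with $M=\lfloor C^{1/k-\epsilon}\rfloor$ and base $\asymp kCM$, the digit construction yields only exponent $\approx 1/(k+1)$. The paper gets around this by taking $A_L$ all the way up to $C^{1/(k-1)-\epsilon}$, but its lemma has only certified rigidity up to $C^{1/k-\epsilon}$, so as written there is an unexplained jump between those two exponents that your projection argument genuinely supplies. One small wrinkle in your write-up: you let $C$ (hence the tuple) depend on $N$, whereas the ``Consequently'' clause is most naturally read as fixing the tuple first and letting $N$ vary. This is harmless --- once the projection has produced a single solution-free $A_0\subseteq[1,N_0]$ with $|A_0|=M^r$ and $N_0=\Theta_{k,\epsilon}(M^{kr+1})$, one further application of Lemma~\ref{lem:base} with base $\asymp kCN_0$ extends it to all $N$ with exponent $r/(kr+1+k/(1-k\epsilon))$, which is still $\ge 1/k-\epsilon$ once $r$ is chosen slightly larger.
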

Subsequently we focus on the obstructions which cause our technique to fail in some of the remaining equations. It turns out that "small" linear combinations of the numbers $a_1,a_2,\cdots, a_k$ which equal 0 create problems if they exist. We manage to partially resolve this difficulty in Section 4, which results in the following theorem.
\begin{theorem}\label{thm:three}
Let $a,b,c$ be positive integers such that $a \leq b \leq c \leq b^{1.01}$ with $b$ large enough, $b,c$ coprime and $c\neq a+b$. For any $N$, there exists a set $A\subseteq [N]$ with $|A| = \Omega(N^{1/4.77})$ such that $A$ has no non-trivial solutions to the equation $ax+by+cz=ax'+by'+cz'$.
\end{theorem}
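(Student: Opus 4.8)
The plan is to run the two--step strategy of Section~2. Since $\gcd(b,c)=1$ the equation is primitive, so it suffices to produce a single finite set $B\subseteq\{1,\dots,M\}$ with no non--trivial solutions; the multi--dimensional base--$L$ construction with $L\asymp(a+b+c)M$ then inflates $B$ to a solution--free subset of $[N]$ of size $\Omega\!\big(N^{\log|B|/\log L}\big)$ for every $N$. Because $b\le c\le b^{1.01}$ we have $a+b+c\asymp c$, so the theorem reduces to exhibiting one finite solution--free $B$ with density exponent $\log|B|/\log(cM)\ge 1/4.77$; henceforth the implied constants are allowed to depend on $a,b,c$.

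To construct $B$ I would analyse the lattice $\Lambda=\{(u,v,w)\in\mathbb Z^{3}:au+bv+cw=0\}$, which records the difference vectors $(x-x',\,y-y',\,z-z')$ of solutions. It is a primitive rank--$2$ sublattice of $\mathbb Z^{3}$ of covolume $\sqrt{a^{2}+b^{2}+c^{2}}\asymp c$, so its successive minima obey $\lambda_{1}\lambda_{2}\asymp c$. Write $\lambda_{1}\asymp c^{\alpha}$, fix a reduced basis $\{r,r'\}$ with $\|r\|=\lambda_{1}$ and $\|r'\|=\lambda_{2}\asymp c^{\,1-\alpha}$, and let $M$ be a small constant multiple of $\lambda_{2}$, chosen so that any vector of $\Lambda$ all of whose coordinates are smaller in absolute value than $M$ is an integer multiple of $r=(u_{0},v_{0},w_{0})$. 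A set $B\subseteq\{1,\dots,M\}$ is then solution--free precisely when no $s\ne 0$ has $su_{0},sv_{0},sw_{0}$ all in $B-B$.

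If $\lambda_{1}\asymp\sqrt c$ there is no obstruction to destroy: $B=\{1,\dots,\lfloor\lambda_{1}/4\rfloor\}$ works and has density exponent tending to $1/3$. Otherwise $r$ is a genuinely short relation among $a,b,c$, and the crucial remark is that, unless $c=a+b$ (relation $(1,1,-1)$) or $a=b$ (relation $(1,-1,0)$), some two coordinates of $r$, after division by their gcd, yield a coprime pair $(p,q)$ of positive integers with $(p,q)\ne(1,1)$ and $\max(p,q)\le\|r\|\asymp c^{\alpha}$; and then the requirement ``$su_{0},sv_{0},sw_{0}$ never all lie in $B-B$'' is implied by ``$B$ has no non--trivial solution to the genus--$2$ equation $px_{1}+qx_{2}=px_{1}'+qx_{2}'$''. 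For such a $B$ I would invoke Corollary~\ref{cor:two_var} (equivalently the genus--$2$ case of the Section~2 machinery applied to $(p,q)$), obtaining $|B|\asymp M^{1/2}$. It remains to balance the two available lower bounds: the bare interval yields density exponent $\asymp\alpha/(1+\alpha)$, while the genus--$2$ set taken in $\{1,\dots,M\}$ with $M\asymp\lambda_{2}$ yields $\asymp\frac{1-\alpha}{2(2-\alpha)}$; the smaller of the two is minimised for $\alpha$ near $2-\sqrt 3$, where both are close to $0.21$, and pushing the $b^{1.01}$ slack together with the rounding of $L$ and of the lifting dimension through the optimisation brings this down to the claimed $1/4.77$. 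Of the excluded degeneracies, $a=b$ is handled by taking $B$ to be a Sidon set of size $\asymp\sqrt M$ in $\{1,\dots,M\}$ with $M\asymp\lambda_{2}\asymp c$ (the relation $(1,-1,0)$ only creates solutions with $x+y=x'+y'$), again giving exponent $\asymp1/4$; while $c=a+b$ is genuinely beyond the method, since there the relation $(1,1,-1)$ forces $B-B=\{0\}$ and, correspondingly, every reduction of the equation modulo $a$, $b$ or $c$ collapses.

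The main obstacle is the intermediate regime where $\lambda_{1}$ is neither of order $\sqrt c$ nor bounded: one can then reduce neither to a vacuous obstruction nor to a genus--$2$ equation with bounded coefficients, and one is forced to play off the size of the auxiliary genus--$2$ set against the length $M$ of its ambient interval, which is itself bounded by $\lambda_{2}$; controlling the dependence of Corollary~\ref{cor:two_var} on $p$ and $q$ in this range is what ultimately pins down the exponent. A more routine point is verifying that a usable coprime pair $(p,q)\ne(1,1)$ always appears among the coordinates of the shortest relation, and it is exactly here that the hypotheses $c\ne a+b$ (removing the single completely degenerate relation) and $c\le b^{1.01}$ (forcing $b$ and $c$ to have the same order, so that the covolume of $\Lambda$ and the lifting base stay $\asymp c$, and ``short relation'' coincides with ``small coordinates'') are used.
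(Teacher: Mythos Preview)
Your plan follows the same architecture as the paper's proof: analyse the rank-$2$ lattice $\Lambda$ of relations, split on whether there is a short relation, reduce the short-relation case to a genus-$2$ equation in two coordinates of the shortest vector (the paper packages this as Lemma~\ref{lem:one_solution}, with Lemma~\ref{lem:small_large} playing the role of your successive-minima estimate $\lambda_1\lambda_2\asymp c$), and balance against the bare-interval construction. The gap is precisely where you locate it, but it is not closed. You write $|B|\asymp M^{1/2}$ after invoking Corollary~\ref{cor:two_var}, yet that corollary only gives exponent $\tfrac12-O(1/\log q)$, and you have recorded only the upper bound $\max(p,q)\le\|r\|$, which points the wrong way: nothing you have said rules out the chosen reduced pair being something like $(5,6)$, for which the genus-$2$ exponent is about $0.445$, and feeding $0.445$ into your balance yields only about $1/5.2$ rather than $1/4.77$.

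The paper fixes this with a further split inside the short-relation case. If $\max_j|i_j|>e^{10^6}$, then since $\gcd(i_1,j_1,k_1)=1$ the two pairwise gcds involving the largest coordinate are coprime to one another, so their product is at most that coordinate; hence at least one of the two reduced pairs containing the largest coordinate has $q>e^{10^3}$, and now Corollary~\ref{cor:two_var} legitimately gives exponent $0.499$, after which the balance against the bare interval produces the claimed $1/4.77$. If instead all $|i_j|\le e^{10^6}$, then for $b$ large enough the threshold can be pushed down to $\alpha_2=0.1$, so that $M\asymp b^{0.9}$; here even the universal genus-$2$ exponent $\log 6/\log 56>0.44$ from Corollary~\ref{cor:no_solutions} already gives a final density exponent above $1/4.77$, and this sub-case is never the bottleneck. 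One small separate correction: primitivity of the equation comes from $c\ne a+b$ together with $a,b,c$ distinct (this is what rules out relations of shape $(\pm1,\pm1,\pm1)$), not from $\gcd(b,c)=1$ as you wrote at the outset.
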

In the final section we give a couple of remarks on non-symmetric equations. For instance if $d$ is large enough and $A$ is the largest subset of $[N]$ with no non-trivial solutions to the equation
$$x_1+x_2+dx_3+dx_4=2y_1+2dy_2,$$
then 
$$N^{1/2-0.01} \ll |A|\ll e^{-c(\log N)^{1/9}} N^{1/2}.$$
\section{Basic approach and examples}
Let us consider a general linear equation of the form $a_1x_1+a_2x_2+\cdots a_kx_k=0$ where $a_1+a_2+\cdots+a_k=0$. Its genus $g$ is the largest number for which there exist $g$ disjoint sets $\mathcal{Q}_i$ such that $\mathcal{Q}_1\cup \mathcal{Q}_2\cdots\cup \mathcal{Q}_g = [k]$ and $\sum_{j\in \mathcal{Q}_i}a_j = 0$ for every $Q_i$.

A trivial solution~\cite{ruzsa} is such that for all $\alpha$, whenever $\mathcal{T}$ are all indices $i\in [k]$ for which $x_i=\alpha$ we have $\sum_{i\in \mathcal{T}} a_i =0$. 
We call an equation primitive if there exists a partition $$\mathcal{T}_1\cup\mathcal{T}_2\cup\cdots\cup\mathcal{T}_l = [k]$$
such that for every $T_j$ we have $\sum_{i\in T_j}a_i = 0$ and whenever for some $\mathcal{T}\subseteq [k]$ there is $\sum_{i\in T}a_i=0$ it is necessarily a union of some of the sets $T_j$ from the partition.

Let us now focus on symmetric equations (1) where $a_1,a_2,\cdots, a_k$ are coefficients on each side. Notice that such equation is primitive if and only if $a_1, a_2\cdots a_k$ form a dissociated set, which means that sums of all subsets are distinct. In that case a trivial solution means that $x_i=x_i'$ for all $i$.

The first lemma serves as a basic tool to almost all considerations in this paper. It allows us to expand a construction from a fixed, finite set to the set of all integers.

\begin{lemma}\label{lem:base}
    Consider a primitive symmetric linear equation
    $$a_1x_1 + a_2x_2 + \cdots a_k x_k = a_1x_1' + a_2x_2' + \cdots a_k x_k'.$$
    Suppose that there exists a number $L>1$ and a set $A_L\subseteq \{0,1,\cdots, L-1\}$ such that the equation has no non-trivial solutions in the set $A_L$. Suppose that $(|a_1|+|a_2|+\cdots+|a_k|) \max_{x\in A_L} < L$. Then for any integer $N$ there exists a set $A\subseteq [N]$ such that the equation has no non-trivial solutions in $A$ and with $|A|=\Omega(N^r)$, where $r=\frac{\log |A_L|}{\log L}$. Moreover, if additionally $A_L=\{0,1,\cdots, m\}$ for some integer $m$, then $|A|\geq N^r$.
\end{lemma}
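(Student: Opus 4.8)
The plan is to use a base-$L$ digit construction. Given the set $A_L \subseteq \{0,1,\dots,L-1\}$ with no non-trivial solutions, I would set $d = \lfloor \log_L N \rfloor$ (roughly) and define
$$A = \left\{ \sum_{j=0}^{d-1} c_j L^j \ :\ c_j \in A_L \text{ for all } j \right\}.$$
Each such number is at most $(L-1)(L^{d-1}+\cdots+1) < L^d \le N$, so $A \subseteq [N]$ after a harmless shift (or by working in $\{0,\dots,N\}$; the $\Omega$ absorbs the difference). The size is $|A_L|^d = L^{d \cdot r}$ with $r = \log|A_L|/\log L$, and since $L^d$ is within a constant factor of $N$ we get $|A| = \Omega(N^r)$; in the special case $A_L=\{0,\dots,m\}$ one can choose $d$ and the leading digit range so that $|A| \ge N^r$ exactly. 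So the combinatorial bookkeeping is routine.

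The substance is showing $A$ inherits the no-non-trivial-solution property, and this is where the hypothesis $(|a_1|+\cdots+|a_k|)\max_{x\in A_L} x < L$ does the work. Suppose $a_1x_1+\cdots+a_kx_k = a_1x_1'+\cdots+a_kx_k'$ with all $x_i,x_i'\in A$; write each in base $L$ as $x_i = \sum_j c_{i,j}L^j$ and $x_i' = \sum_j c_{i,j}'L^j$. Then for each fixed digit position $j$, the quantity $\sum_i a_i c_{i,j} - \sum_i a_i c_{i,j}'$ lies in the interval $(-L, L)$ by the size hypothesis (each $|c_{i,j}|, |c_{i,j}'| \le \max_{x\in A_L} x$). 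So when we expand the global equation $\sum_j L^j\big(\sum_i a_i c_{i,j} - \sum_i a_i c_{i,j}'\big) = 0$, there are no carries between positions: a standard argument (the lowest nonzero coefficient would force a contradiction mod $L$, or one can induct on $j$) shows that $\sum_i a_i c_{i,j} = \sum_i a_i c_{i,j}'$ for every $j$ separately. Thus each digit slice is itself a solution of the original equation inside $A_L$.

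Finally I would invoke primitivity to conclude triviality. By hypothesis each digit slice solution is trivial in $A_L$; since the equation is symmetric and primitive, $a_1,\dots,a_k$ form a dissociated set, and as noted in the excerpt a trivial solution of such an equation means $c_{i,j} = c_{i,j}'$ for all $i$ — i.e., the digits agree in every position. Reassembling, $x_i = x_i'$ for all $i$, so the original solution in $A$ was trivial. The one point requiring a little care — and the main obstacle — is the carry-free digit separation: I must make sure the bound $(|a_1|+\cdots+|a_k|)\max x < L$ genuinely controls both the positive and negative parts of $\sum_i a_i(c_{i,j}-c_{i,j}')$ and that the induction on digit positions is set up so that a carry from lower positions cannot masquerade as a valid contribution; once that is pinned down the rest is immediate from primitivity.
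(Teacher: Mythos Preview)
Your approach is essentially the paper's: base-$L$ digit construction, no carries thanks to the size hypothesis, and primitivity forcing each digit slice to be trivial. The paper makes one cosmetic simplification you omit---it first reduces to all $a_i>0$ by swapping $x_i\leftrightarrow x_i'$ where needed, so that each side of the equation is itself a carry-free base-$L$ sum---but your difference-based version (each digit-level discrepancy lies in $(-L,L)$, hence must vanish by looking mod $L$ and inducting) is equally valid.

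The one place you are too casual is the ``moreover'' clause. Getting $|A|\ge N^r$ with no implied constant when $A_L=\{0,\dots,m\}$ is not just a matter of picking $d$ and a leading-digit range; the paper actually runs an induction on the number of base-$L$ digits of $N$. Writing $N=qL^d+M$ with $0\le q<L$ and $M<L^d$, the case $q>m$ is immediate, but for $q\le m$ one needs
\[
qL^{rd}+M^r \;=\; (q^{1/r}L^d)^r + M^r \;\ge\; (q^{1/r}L^d+M)^r \;\ge\; (qL^d+M)^r \;=\; N^r,
\]
using concavity (hence subadditivity) of $x\mapsto x^r$ and $q^{1/r}\ge q$. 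This short concavity step is the missing idea in your ``routine bookkeeping''.
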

\begin{proof}
    Without loss of generality we can assume that $a_1,a_2,\cdots, a_k>0$. If $a_i$ was negative we could swap $x_i$ and $x_i'$ while flipping the sign of $a_i$. If $a_i=0$ we could remove such term as it has no effect on the statement. 
    We represent the numbers from 1 to $N$ in the digit base $L$. Then set the set $A$ to be the numbers with all digits in the set $A_L$. Clearly if $x_1,\cdots, x_k\in A_L$, then $a_1x_1 + a_2x_2 + \cdots a_k x_k < L$. Thus there is no carry-over in base $L$. Suppose that $y_1,y_2,\cdots y_k, y_1',y_2',\cdots y_k'\in A^{2k}$ form a non-trivial solution. That means $y_i\neq y_i'$ for some $i$. Choose any digit in the base $L$ system which is different for $y_i$ and $y_i'$. If we restrict all of the $y_1,y_2,\cdots y_k, y_1',y_2',\cdots y_k'$ to this digit we also have a solution since there is no carry-over. Because the equation is primitive this is a non-trivial solution because of $y_i$ and $y_i'$. Thus any solution in $A$ must be trivial because all solutions in $A_L$ are trivial.\\
    Let us calculate the size of $A$. Suppose first that that $N=L^d$, then since $|A_L|=L^r$ the size of $A$ can be calculated as
    $$|A|=|A_L|^d = L^{rd} = N^r.$$
    If $N$ is a non-integer power of $L$ we can choose the first digit always to be $0$. Then $|A|\geq N^r / L = \Omega(N^r).$\\
    In the second part, we assume that $A_L$ are the $m+1$ smallest digits in the base $L$. We proceed by induction on the number of digits of $N$ in the base $L$. If $N$ has zero digits then $N=0$ and the statement is vacuously true. Now suppose $N$ has $d+1$ digits for $d\geq 0$. Then $N = q L^d + M$, where $q<L$ and $M$ has at most $d$ digits.
    If $q>m$ then
    $$ |A| = (m+1) L^{rd} =  L^{r(d+1)} \geq  N^r$$
    If $q\leq m$ then using the previous step of induction, we have
    
    $$|A| \geq q L^{rd} + M^r\geq (q^{1/r}L^d+M)^r \geq N^r,$$
    by noting $0<r \leq 1$ and thus $f(x)=x^r$ is concave.
\end{proof}
Let us remark here, that such multi-dimensional constructions are in some sense universal for primitive equations. Let $s$ be the sum of absolute values of coefficients. Suppose, that by some unknown construction, we can for all $N$ choose a set $A\subseteq[N]$ with $|A|\geq c N^q$. For a fixed $\epsilon >0$ pick large $L$, so that $ L \geq (2s/c)^{1/\epsilon}$ and fix $A\subseteq [L]$. Let $l = \lfloor L/s \rfloor$. Choose $t$ so that 
$$|A\cap (t+[l])| \geq \lfloor c L^q/s \rfloor \geq \frac{c L^q}{2s}.$$
Our equation is translation-invariant, therefore $A_L = (A\cap (t+[l])) - t$ also does not contain any solutions. Moreover all elements are at most $l$, so we can apply Lemma~\ref{lem:base} to obtain for any $N$ a set of size at least $\Omega(N^r)$, where 
$$r=\frac{q\log L - \log (2s/c)}{\log L}\geq q - \epsilon.$$
Therefore we can approximate any construction by the multi-dimensional construction form Lemma~\ref{lem:base}. The fact that we are unable to find any reasonably big set without solution to the equation $2x+2y=3w+z$ with a computer for small $N$, might therefore suggest that the correct bound is actually closer to $O(N^{\rho})$ for some $\rho <1$. This is of course just speculation as the constant $c$ could be very small.

Here is a well-known example, which could be a first application of Lemma \ref{lem:base}. To our knowledge this is the only equation with optimal lower-bound in primitive equation of more than 4 variables.
\begin{corollary}\label{cor:powers}
    Let $k,m\geq 2$ and $N\geq 1$, there exists a set $A\subseteq [N]$ with $|A|\geq N^{1/k}$, such that it contains no non-trivial solutions to the equation
    $$x_1+mx_2 + \cdots + m^{k-1}x_k=x_1'+mx_2'+ \cdots + m^{k-1}x_k'.$$
\end{corollary}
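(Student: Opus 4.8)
The plan is to apply Lemma~\ref{lem:base} directly, with $L = m^k$ and $A_L = \{0, 1, \ldots, m-1\}$. The first thing to check is that the equation is primitive, which the lemma requires. Its coefficients are $1, m, m^2, \ldots, m^{k-1}$, and for a symmetric equation primitivity is equivalent to these forming a dissociated set. This holds because any two distinct subsets of $\{1, m, \ldots, m^{k-1}\}$ have distinct sums: a subset sum is precisely an integer all of whose base-$m$ digits lie in $\{0,1\}$, and distinct subsets produce distinct digit strings. Consequently the only trivial solutions are those with $x_i = x_i'$ for every $i$.

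Next I would verify the three hypotheses of the lemma for this choice of $L$ and $A_L$. First, $|A_L| = m$ and $L = m^k$, so $r = \frac{\log |A_L|}{\log L} = \frac{\log m}{k \log m} = \frac1k$, which is exactly the exponent we want. Second, the sum of the (positive) coefficients is $1 + m + \cdots + m^{k-1} = \frac{m^k - 1}{m-1}$, and the largest element of $A_L$ is $m - 1$, so $(|a_1| + \cdots + |a_k|)\max_{x \in A_L} x = m^k - 1 < m^k = L$; thus the no-carry condition holds, and it is here (and essentially only here) that we genuinely use that the digits stop at $m-1$ rather than running up to $m$. Third, there are no non-trivial solutions in $A_L$: if $x_1 + m x_2 + \cdots + m^{k-1} x_k = x_1' + m x_2' + \cdots + m^{k-1} x_k'$ with all $x_i, x_i' \in \{0, \ldots, m-1\}$, then both sides are the base-$m$ expansion of the same integer less than $m^k$, so $x_i = x_i'$ for every $i$, i.e. the solution is trivial.

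Finally, since $A_L = \{0, 1, \ldots, m-1\}$ is exactly the set of the $m$ smallest digits in base $L$, the ``moreover'' clause of Lemma~\ref{lem:base} applies and yields, for every $N$, a set $A \subseteq [N]$ free of non-trivial solutions with $|A| \geq N^r = N^{1/k}$ (for $N=1$ one simply takes $A = \{1\}$). There is no substantial obstacle in this argument: the only points worth double-checking are the equivalence between primitivity and dissociativity for symmetric equations, which is already recorded in the text, and the strict inequality $m^k - 1 < m^k$ that certifies the absence of carries.
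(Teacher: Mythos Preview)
Your proposal is correct and follows exactly the same approach as the paper: apply Lemma~\ref{lem:base} with $L=m^k$ and $A_L=\{0,1,\dots,m-1\}$, then use the ``moreover'' clause to get $|A|\geq N^{1/k}$. If anything, you are more careful than the paper in explicitly checking primitivity, the no-carry inequality $(1+m+\cdots+m^{k-1})(m-1)=m^k-1<L$, and the absence of non-trivial solutions in $A_L$ via uniqueness of base-$m$ expansions.
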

\begin{proof}
    Let $L=m^k$ and $A_L=\{0,1,\cdots, m-1\}$. By the proposition we obtain $|A|\geq N^r$, where $r=\frac{\log |A_L|}{\log L} = \frac{1}{k}$.
\end{proof}
Our next step is generalizing the argument for $k=2$ to any symmetric equation. Ruzsa already showed in~\cite{ruzsa} how to deal with equations with 4 variables in an optimal way, however in the proof of a later theorem we will need the result below rather than Ruzsa's construction. That is because Ruzsa's construction works for $N > (a+b)^3$ and we will consider smaller $N$.
\begin{corollary}\label{cor:two_var}
    Let $a,b$ be coprime integers with $|a|<|b|$ and let $N>0$, there exists a set $A\subseteq [N]$ with $|A|\geq N^r$, where $$r = \frac{\log |b|}{\log (1+(|a|+|b|)(|b|-1))}\geq \frac{1}{2} - \frac{1}{\log |b|},$$ such that it contains no non-trivial solutions to the equation
    $$ax_1+bx_2=ax_1'+bx_2'.$$
\end{corollary}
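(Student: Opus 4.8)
The plan is to instantiate Lemma~\ref{lem:base} with the smallest possible seed set. After possibly interchanging $x_i$ with $x_i'$ (which flips the sign of $a_i$ without affecting the conclusion, exactly as in the proof of Lemma~\ref{lem:base}) we may assume $0 < a < b$ (if $a = 0$ then $|b| = 1$ by coprimality and the statement degenerates). The equation is then primitive, since the four subset sums $0, a, b, a+b$ of $\{a,b\}$ are pairwise distinct because $0 < a < b < a+b$, i.e. $\{a,b\}$ is dissociated. Put
$$L = 1 + (a+b)(b-1), \qquad A_L = \{0, 1, \ldots, b-1\} \subseteq \{0, 1, \ldots, L-1\}.$$

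First I would verify that $A_L$ contains no non-trivial solution. If $ax_1 + bx_2 = ax_1' + bx_2'$ with all four variables in $A_L$, then $a(x_1 - x_1') = b(x_2' - x_2)$, and $\gcd(a,b)=1$ forces $b \mid (x_1 - x_1')$; but $|x_1 - x_1'| \le b-1 < b$, so $x_1 = x_1'$ and hence $x_2 = x_2'$. Next, the numerical hypothesis of Lemma~\ref{lem:base} holds: $(|a|+|b|)\max_{x\in A_L} x = (a+b)(b-1) < L$. Applying Lemma~\ref{lem:base}, together with its final clause (valid because $A_L = \{0,1,\ldots,b-1\}$ is an initial block of digits), produces for every $N$ a set $A\subseteq[N]$ with no non-trivial solution and $|A| \ge N^r$, where
$$r = \frac{\log |A_L|}{\log L} = \frac{\log b}{\log\bigl(1 + (a+b)(b-1)\bigr)},$$
which is the stated exponent (restoring absolute values in the general case).

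It remains to establish the explicit bound $r \ge \tfrac12 - \tfrac1{\log b}$. Since $a,b$ are integers with $a<b$ we have $a \le b-1$, hence $L = 1 + (a+b)(b-1) \le 1 + (2b-1)(b-1) < 2b^2$, so $\log L < 2\log b + \log 2$. Therefore
$$r = \frac{\log b}{\log L} > \frac{\log b}{2\log b + \log 2} = \frac12\cdot\frac{1}{1 + \frac{\log 2}{2\log b}} \ge \frac12\left(1 - \frac{\log 2}{2\log b}\right) \ge \frac12 - \frac1{\log b},$$
using $\frac1{1+x} \ge 1-x$ for $x\ge 0$ and $\log 2 < 4$ (so $\tfrac{\log 2}{4\log b}\le\tfrac1{\log b}$), valid since $b\ge 2$.

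I do not expect a serious obstacle here: the whole content is the choice $A_L = \{0,\ldots,b-1\}$, which is exactly large enough (size $b$) yet small enough that no difference of two of its elements can be a nonzero multiple of $b$, combined with the observation that the "no carry" inequality $(a+b)(b-1) < L$ still holds for the very value of $L$ that produces the advertised exponent. The only place demanding a little care is this simultaneous balancing of the two requirements on $L$; the closing inequality is routine estimation.
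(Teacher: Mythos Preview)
Your proof is correct and follows essentially the same route as the paper: the same seed set $A_L=\{0,\ldots,b-1\}$ with $L=1+(a+b)(b-1)$, the same divisibility argument to rule out non-trivial solutions, the same application of Lemma~\ref{lem:base} (you explicitly invoke the ``moreover'' clause to get $|A|\ge N^r$ rather than $\Omega(N^r)$, which the paper uses tacitly), and the same estimate $L<2b^2$ for the final inequality. Your write-up is in fact slightly more careful than the paper's, spelling out primitivity and the $\tfrac{1}{1+x}\ge 1-x$ step.
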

\begin{proof}
    Without loss of generality assume that $0<a<b.$
    Let $L=(a+b)(b-1)+1$ and $A_L=\{0,1,\cdots, b-1\}$. Notice that if
    $$ax_1+bx_2=ax_1'+bx_2',$$
    then $a(x_1-x_1')=b(x_2-x_2'),$ so $b|(x_1-x_1')$, therefore $x_1=x_1'$. We get $b(x_2-x_2')=0$ and so $x_2=x_2'$ and the solution is tivial.
    By Lemma~\ref{lem:base} we obtain $|A|\geq N^r$, where $r=\frac{\log b}{\log (1+(a+b)(b-1))}$.
    Notice that for large $b$ the ratio $r$ tends to $1/2$ as
    $$r > \frac{\log b}{\log 2 + 2\log b} \geq \frac{1}{2} - \frac{1}{\log b}.$$
\end{proof}
It follows that if $|b|$ is at least $e^{10^3}$ we have $r>0.499$.
\begin{corollary}\label{cor:no_solutions}
    Let $a,b$ be coprime integers with $|a|<|b|$ and let $N>0$, there exists a set $A\subseteq [N]$ with $|A|\geq N^{\log 6/\log 56}$, such that it contains no non-trivial solutions to the equation
    $$ax_1+bx_2=ax_1'+bx_2'.$$
\end{corollary}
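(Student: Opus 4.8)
The plan is to deduce this from Corollary~\ref{cor:two_var} by turning it into a uniform statement. That corollary already gives, for coprime $a,b$ with $|a|<|b|$ and any $N>0$, a set $A\subseteq[N]$ free of non-trivial solutions with $|A|\ge N^{r}$, where $r=\frac{\log|b|}{\log(1+(|a|+|b|)(|b|-1))}$; moreover the set $A_L$ used there is $\{0,1,\dots,|b|-1\}=\{0,\dots,m\}$, so the sharp conclusion $|A|\ge N^{r}$ (not merely $\Omega(N^{r})$) applies. Hence it suffices to prove the purely elementary claim $r\ge\frac{\log 6}{\log 56}$ for every admissible pair $(a,b)$, since then $N^{r}\ge N^{\log 6/\log 56}$ as $N\ge1$.

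First I would normalize exactly as in the proof of Corollary~\ref{cor:two_var}: replacing $a,b$ by $|a|,|b|$ does not change the solution set, so we may assume $1\le a<b$ with $b\ge2$. For fixed $b$ the denominator $1+(a+b)(b-1)$ is increasing in $a$, so $r$ is smallest when $a$ is largest; since $a=b-1$ is always coprime to $b$, the worst case is $a=b-1$, where $1+(a+b)(b-1)=2b^{2}-3b+2$. Thus the whole statement reduces to
$$\frac{\log b}{\log(2b^{2}-3b+2)}\ \ge\ \frac{\log 6}{\log 56}\qquad\text{for every integer }b\ge2,$$
equivalently, writing $\gamma=\tfrac{\log 56}{\log 6}$, to $\varphi(b):=b^{\gamma}-(2b^{2}-3b+2)\ge0$ for all integers $b\ge2$. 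The constant is forced by the point $b=6$: there $2b^{2}-3b+2=56=6^{\gamma}$, so $\varphi(6)=0$ and the bound is attained at $(a,b)=(5,6)$.

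It remains to verify $\varphi(b)\ge0$ for integers $b\ge2$. For $b\in\{2,3,4,5\}$ I would check $\varphi(b)>0$ directly (each is a single comparison of a power of $b$ with a small integer), and $\varphi(6)=0$ is exact. For $b\ge7$ I would show $\varphi$ is increasing: $\varphi'(b)=\gamma b^{\gamma-1}-4b+3$ and $\varphi''(b)=\gamma(\gamma-1)b^{\gamma-2}-4$, and since $\gamma-2>0$ the function $b\mapsto b^{\gamma-2}$ is increasing, so one check at $b=5$ gives $\varphi''(b)>0$ for all $b\ge5$; hence $\varphi'$ is increasing on $[5,\infty)$, and the single check $\varphi'(7)>0$ then yields $\varphi'(b)>0$ for $b\ge7$, so $\varphi(b)\ge\varphi(7)>0$ there. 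Combining the cases completes the proof. The only real obstacle is that the target constant is essentially tight near $b=6$: crude bounds such as $2b^{2}-3b+2<2b^{2}$ only handle $b$ beyond about $17$, so one must either examine a short explicit range of small $b$ or, as above, pin down the monotonicity of $\varphi$ exactly; the large-$b$ regime is otherwise immediate from the $r\to1/2$ behaviour already noted in Corollary~\ref{cor:two_var}.
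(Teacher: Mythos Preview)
Your proof is correct and follows essentially the same approach as the paper: reduce to the worst case $a=b-1$, obtaining $r=\tfrac{\log b}{\log(2b^{2}-3b+2)}$, and then argue that this quantity is minimised over integers $b\ge 2$ at $b=6$. The paper simply asserts this minimisation as ``not hard to calculate'', whereas you supply a rigorous verification via the convexity of $\varphi(b)=b^{\gamma}-(2b^{2}-3b+2)$ together with a short check for small $b$; this is a welcome expansion but not a different route.
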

\begin{proof}
    Without loss of generality assume that $0<a<b.$
    By the result above we know that there exists $A\geq N^r$, where
    $$r= \frac{\log b}{\log (1+(a+b)(b-1))}\geq \frac{\log b}{\log (1+(2b-1)(b-1))}.$$
    It is not hard to calculate that such function in $b$ has its minimum at $b=6$ if $b$ is a positive integer. We note that $\frac{\log 6}{\log 56}\in (0.445, 0.446)$.
\end{proof}
By using the above method we extend the class of equations given in Corollary~\ref{cor:powers} with a loss of small $\epsilon$.
\begin{corollary}\label{cor:coprime}
    Let $k,a, b\geq 2$ with $a,b$ coprime and $a\leq b^{k-1}$ let $N\geq 1$. There exists a set $A\subseteq [N]$ with $|A|\geq N^{\frac{1}{k}-\epsilon_{b}}$, such that it contains no non-trivial solutions to the equation
    $$ax_1+ bx_2 +\cdots + b^{k-1} x_k=a x_1' + b x_2'+\cdots + b^{k-1 } x_k'.$$
\end{corollary}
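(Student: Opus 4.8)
The plan is to apply Lemma~\ref{lem:base} with the single digit set $A_L=\{0,1,\dots,b-1\}$, essentially splicing the base-$L$ blow-up used in Corollary~\ref{cor:powers} with the coprimality trick used in Corollary~\ref{cor:two_var}. First I would set
$$L=(a+b+b^2+\cdots+b^{k-1})(b-1)+1,$$
which is precisely the value that makes the hypothesis $(a+b+\cdots+b^{k-1})\max_{x\in A_L}x<L$ of Lemma~\ref{lem:base} hold, since $\max_{x\in A_L}x=b-1$.

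Next I would verify that the equation has no non-trivial solution with all variables in $A_L$. Suppose $ax_1+bx_2+\cdots+b^{k-1}x_k=ax_1'+bx_2'+\cdots+b^{k-1}x_k'$. Reducing modulo $b$ gives $a(x_1-x_1')\equiv 0\pmod b$, and since $\gcd(a,b)=1$ this forces $b\mid x_1-x_1'$; as $x_1,x_1'\in\{0,\dots,b-1\}$ we get $x_1=x_1'$. Cancelling the $a$-terms and dividing by $b$ leaves $(x_2-x_2')+b(x_3-x_3')+\cdots+b^{k-2}(x_k-x_k')=0$ with each $x_i-x_i'\in\{-(b-1),\dots,b-1\}$, and the usual uniqueness of base-$b$ representation (peel off the last digit: $b\mid x_2-x_2'$, hence $x_2=x_2'$, then induct) yields $x_i=x_i'$ for all $i$. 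So every solution in $A_L$ is trivial.

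Lemma~\ref{lem:base} then gives, for every $N$, a set $A\subseteq[N]$ with no non-trivial solution and $|A|\ge N^{r}$, where $r=\frac{\log b}{\log L}$ — the clean bound rather than $\Omega(N^r)$ being available because $A_L=\{0,\dots,b-1\}$ is an initial segment of digits. It remains to check $r\ge\frac1k-\epsilon_b$, and this is where the hypothesis $a\le b^{k-1}$ is used: together with $b+b^2+\cdots+b^{k-1}\le 2b^{k-1}$ for $b\ge2$, it gives $a+b+\cdots+b^{k-1}\le 3b^{k-1}$, hence $L\le 3b^{k-1}(b-1)+1<3b^k$, and therefore
$$r=\frac{\log b}{\log L}>\frac{\log b}{\log 3+k\log b}\ge\frac1k-\frac{\log 3}{k^2\log b},$$
so one may take $\epsilon_b=\frac{\log 3}{k^2\log b}$ (or any slightly larger quantity, to keep statements tidy).

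No single step is a real obstacle here — the argument is a direct combination of the blow-up of Corollary~\ref{cor:powers} with the mod-$b$ observation of Corollary~\ref{cor:two_var}. The only points needing mild care are noticing that coprimality has to be invoked only on the first coordinate, after which pure base-$b$ uniqueness finishes the no-solutions check, and then tracking the constants so that the loss is genuinely of the form $\epsilon_b\to0$ as $b\to\infty$ (uniformly in $k$, in fact, with the bound above).
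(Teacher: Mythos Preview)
Your proof is correct and follows essentially the same route as the paper: take $A_L=\{0,\dots,b-1\}$, use coprimality of $a,b$ to force $x_1=x_1'$, then base-$b$ uniqueness for the remaining coordinates, and apply Lemma~\ref{lem:base} with the minimal admissible $L$. The only cosmetic differences are that you argue the no-solutions step in the opposite order (first $x_1=x_1'$, then the rest) and obtain a slightly sharper $\epsilon_b=\tfrac{\log 3}{k^2\log b}$ in place of the paper's $\tfrac{1}{\log b}$.
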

\begin{proof}
    Choose $A_L=\{0,1,\cdots, b-1\}$. By the same reasoning as in Corollary \ref{cor:powers} if $x_2,\cdots, x_k\in A_L$ are not all zero, we have
    $$-a(x_1-x_1')=b(x_2-x_2')+b^2(x_3-x_3')+\cdots+b^{k-1}(x_k-x_k')\neq 0.$$
    This number is divisible by $b$, thus if we have a solution then $b|a(x_1-x_1')$ and $x_1=x_1'$, which is a contradiction with $a(x_1-x_1')\neq 0$. By using Lemma \ref{lem:base} we construct a set $|A|\geq N^r$ for $r=\frac{\log b}{\log(b^k-a-b+ab)}\geq \frac{\log b}{\log(b^k+ab)}\geq \frac{\log b}{k\log b+ \log 2}\geq \frac{1}{k}-\frac{1}{\log b}$.
\end{proof}
Here we can mention one similar construction. For a fixed equation by $R(N)$ denote the largest subset of $[N]$ without solutions to the equation with distinct variables and by $r(N)$ the largest such subset with variables not necessarily distinct. Surprisingly, Ruzsa~\cite{ruzsa} showed that there exists a symmetric equation of any length for which $R(N)\geq N^{1/2-\epsilon}$, while we know that $r(N)\ll N^{1/k}$ for a symmetric equation of length $k$. On the other hand, Bukh~\cite{bukh} showed that for any symmetric equation $R(N)\ll N^{1/2-\epsilon}$ where $\epsilon$ depends on the equation. It is also known~\cite{graph} that any for any invariant equation of genus at least 2 we have $R(N)\ll N^{1/2}$. The question of what happens to $R(N)$ remains open for most equations of genus at least $3$. We can contribute to this problem by building sets without solutions to some equations in distinct variables. For $m\geq 2$ we consider the equation 
$$mx_1 + (2m-2)x_2 + (3m-3)x_3 = mx_1' + (2m-2)x_2' + (3m-3)x_3'.$$
It is not hard to see that if $x_1,x_2,x_3,x_1',x_2',x_3'\in \{0,1,\cdots,m-2\}$ satisfy this equation, we must have $x_1=x_1'$. Using Lemma~\ref{lem:base} we obtain a subset of $[N]$ of size at least $N^{1/2-\epsilon_m}$. It is also possible to construct sets of size $N^{\frac{1}{3}+\epsilon}$ for equations in which coefficients on one side are coprime, for example $43x_1 + 69x_2 + 70x_3 = 43x_1' + 69x_2' + 70x_3'$ (here we get size $\Omega(N^{1/2.96})$ - checked by a computer). This would perhaps support the suspicion that for most equations the behaviour of $r(N)$ and $R(N)$ are very different.

In Corollary \ref{cor:powers} and Corollary \ref{cor:coprime} coefficients grow rapidly as they are powers of an integer. Notice that more generally, if we have an equation
$$a_1x_1+ a_2x_2 +\cdots + a_{k} x_k=a_1 x_1' + a_2 x_2'+\cdots + a_{k} x_k'$$
where $s|a_i|\leq |a_{i+1}|$ for some integer $s$ and all $i < k$, we can choose $A_L=\{0,1,\cdots s-1\}$ as in the proof of Corollary \ref{cor:powers}. Thus we obtain a set $A\subseteq [N]$ for every $N$ for which $|A|\geq N^r$ and $r=\frac{\log s}{\log s + \log (a_1+a_2+\cdots a_k)}$. 
Suppose that $k=3$, $a_1=2$, $a_2=17$, $a_3=167$ (primes). Here we can choose $s=8$ and immediately $r= \frac{\log 8}{\log 8 + \log (2+17+167)}\geq \frac{1}{3.52}$, which is by far better than the greedy construction $N^{1/5}$ and not much worse than the upper-bound $N^{1/3}$. It seems reasonable to ask what happens when the coefficients $a_1 \leq a_2\leq \cdots\leq a_k$ are comparable, for example when $|a_k|\leq |a_1|^{1.01}$. We consider this question in the next two sections.
\section{Average Case}
In the result below we show that almost all (in the probabilistic sense) symmetric equations there exists a lower-bound construction optimal up to small $\epsilon$.
\begin{lemma}
Let $k\geq 2$ and $\epsilon>0$, then for all integers $C\geq C_{\epsilon}$ the following holds. Let $a_1, a_2, \cdots, a_k\in [1, C]$. There are at least $(1-\epsilon)C^k$ such $k$-tuples where the function
$$(i_1,i_2,\cdots, i_k)\rightarrow i_1a_1 + i_2a_2 +\cdots + i_ka_k$$
is injective for all $1\leq i_1,i_2,\cdots,i_k\leq C^{1/k-\epsilon}$. Moreover, $C_{\epsilon}$ can be taken to be equal to $(\epsilon^{-1} 2^k)^{1/(\epsilon k)}$.
\end{lemma}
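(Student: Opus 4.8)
The plan is to bound the number of \emph{bad} $k$-tuples — those $(a_1,\dots,a_k)\in[1,C]^k$ for which the displayed map fails to be injective on $\{1,\dots,M\}^k$, where $M:=\lfloor C^{1/k-\epsilon}\rfloor$ — and to show there are at most $\epsilon C^k$ of them once $C\geq C_\epsilon$. First I would record the basic reformulation: a collision $i_1a_1+\dots+i_ka_k=i_1'a_1+\dots+i_k'a_k$ with $i=(i_1,\dots,i_k)$ and $i'=(i_1',\dots,i_k')$ both in $\{1,\dots,M\}^k$ and $i\neq i'$ becomes, after setting $d_j:=i_j-i_j'$, exactly a \emph{nonzero} vector $d=(d_1,\dots,d_k)$ with every $d_j\in\{-(M-1),\dots,M-1\}$ satisfying $d_1a_1+\dots+d_ka_k=0$. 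Thus a tuple is bad if and only if it lies on one of these finitely many ``forbidden'' hyperplanes.

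Next I would apply a union bound over the nonzero vectors $d$. For one fixed such $d$, choose an index $j_0$ with $d_{j_0}\neq 0$; then the equation $\sum_j d_j a_j=0$ determines $a_{j_0}$ uniquely (as a rational number, hence at most one admissible value) once the remaining $k-1$ coordinates are fixed, so at most $C^{k-1}$ tuples $a\in[1,C]^k$ satisfy it. Since there are fewer than $(2M)^k$ nonzero vectors $d$ in the box, the number of bad tuples is at most
$$(2M)^k C^{k-1}=2^k M^k C^{k-1}\leq 2^k C^{1-k\epsilon}C^{k-1}=2^k C^{-k\epsilon}\cdot C^k,$$
where I used $M\leq C^{1/k-\epsilon}$, hence $M^k\leq C^{1-k\epsilon}$.

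Finally I would solve $2^k C^{-k\epsilon}\leq\epsilon$ for $C$: this rearranges to exactly $C\geq(\epsilon^{-1}2^k)^{1/(\epsilon k)}$, the claimed $C_\epsilon$. For $C$ at least this threshold, at most $\epsilon C^k$ tuples are bad, leaving at least $(1-\epsilon)C^k$ good ones, as required; and in the degenerate range $\epsilon\geq 1/k$ one has $M\leq 1$, so the domain of the map is a single point or empty and the statement is trivial, with the estimate above still valid. I do not expect a genuine obstacle here — the argument is a single union bound. The only points needing care are that a non-trivial collision $(i,i')$ corresponds precisely to a nonzero difference vector $d$ (so the count is of the right objects), and that replacing $C^{1/k-\epsilon}$ by its integer part only shrinks $M$ and hence only helps.
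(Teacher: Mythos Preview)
Your proof is correct and follows essentially the same approach as the paper: reformulate a collision as a nonzero difference vector $d$, union-bound over the at most $(2C^{1/k-\epsilon})^k$ such vectors, and observe that each one pins down one coordinate of $a$ and hence admits at most $C^{k-1}$ solutions. Your version is in fact slightly more careful (you choose an index $j_0$ with $d_{j_0}\neq 0$ rather than defaulting to $a_k$, and you handle the degenerate range $\epsilon\geq 1/k$), but the argument is the same.
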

\begin{proof}
    Let us count the number of tuples for which the function is not injective. Suppose that for some $a_1,a_2,\cdots a_k$ we can find numbers $i_1,i_2,\cdots i_k, i_1', i_2', \cdots i_k'$ such that
    $$a_1(i_1-i_1')+\cdots +a_k(i_k-i_k') = 0.$$ Since from this equation we can determine $a_k$ based on all other variables, $a_k$ can take at most $C^{k-1}\cdot (2\cdot C^{1/k-\epsilon})^k$ many values (at most one value for every assignment of $a_1,\cdots a_{k-1}$ and $i_1-i_1',\cdots, i_k-i_k'$). This value can be estimated as
    $$2^k C^{k-\epsilon k}\leq \epsilon C^k,$$
    which gives us the right number of tuples.
\end{proof}
The proof of Theorem~\ref{thm:random} is now very quick.
\begin{proof}
    We will take
    $$C_{\epsilon} = \Bigl(\frac{k2^k}{\epsilon}\Bigr)^{1/(\epsilon k)},$$
    unless $\epsilon > \frac{1}{k},$ then we take $C_{\epsilon} = C_{1/k}$. Thus we can assume that $\epsilon \leq \frac{1}{k}$.
    
    The first part of the statement is a quick consequence of the previous lemma. Notice that we can use it as
    $$C_{\epsilon}\geq (\epsilon^{-1} 2^k)^{1/(\epsilon k)}.$$
    We also see that any equation coming from obtained tuples is primitive, as otherwise the function would not be injective even with coefficients $\{0,1\}$. For the second part, we use Lemma~\ref{lem:base} with base $$L=kC\lfloor C^{1/(k-1)-\epsilon}\rfloor\geq (a_1+a_2+\cdots+a_k)\lfloor C^{1/(k-1)-\epsilon}\rfloor$$
    and the set $A_L = \{0,1,\cdots, \lfloor C^{1/(k-1)-\epsilon}\rfloor\}$.
     As a result we have $|A|=\Omega(N^r)$, where 
     $$r=\frac{\log\lfloor  C^{1/(k-1)-\epsilon} \rfloor}{\log( kC\lfloor C^{1/(k-1)-\epsilon}\rfloor)}\geq
     \frac{(1/(k-1)-\epsilon)\log C - \log 2}{((k/(k-1)-\epsilon)\log C)+\log k}\geq \frac{1}{k} -\epsilon.$$
     We verify the last inequality directly by
     \begin{equation*}
     \begin{split}
     C &\geq (k2^k)^{1/\epsilon}\\
     \log C &\geq \frac{k\log 2 + \log k}{\epsilon}\\
     \Bigl(\frac{k^2}{k-1}-k\Bigr)\epsilon\log C + k\epsilon\log k&\geq k\log 2 + \log k\\
     \Bigl(\frac{k^2}{k-1}-k\epsilon-(k-1)\Bigr)\epsilon\log C + k\epsilon\log k &\geq k\log 2 + \log k\\
     (k/(k-1)-k\epsilon)\log C - k\log 2&\geq (((k/(k-1)-\epsilon)\log C)+\log k)(1-k\epsilon).
     \end{split}   
     \end{equation*}
     which is equivalent to the desired inequality via multiplying by the denominators.
\end{proof}
\section{More general result for 6 variables}
In the above argument the main obstacle is linear dependence between numbers $a$, $b$, $c$. For example, if $2a+b=c$ we cannot pick $L>1$ so that $i,j,k\in [-L,L]\setminus \{0\}$ have $ia+jb+kc \neq 0.$ In some cases we can however pick a subset of $[-L,L]$ for a larger $L$. For example consider $a=10, b=11, c=31$, which satisfy such linear dependency. The set $A_0=\{0,1,4,5\}$ will be a good subset of digits in a base $5\cdot(a+b+c)+1=260$ digit system. By Lemma \ref{lem:base}, for any $N$ we can pick $A\subseteq N$ without solutions to $$10x+11y+31z=10x'+11y'+31z'$$
and with $|A|=\Omega(N^{4.02}).$ Here the set $A_0-A_0$ contains no numbers of the form $x$, $2x$ and thus we managed to avoid the small linear combination $2a+b=c$ and its multiples. It turns out that in general we can easily avoid one such linear dependency as is presented below.
\begin{lemma}\label{lem:one_solution}
    Let $ia + jb + ck=0$ be a linear equation in variables $i,j,k$, such that the only solutions in the set $\{-L,\cdots, L\}$ are of the form $t(i_1, j_1, k_1)$ for some rational number $t$ and a basic solution $(i_1, j_1, k_1)$. Assume that $|i_1| < |j_1|$. Then there exists a set $A \subseteq [L]$, with $|A|\geq L^{0.44}$, such that $A-A$ contains no non-zero solutions to the equation $ia + jb + ck=0$. Moreover, if $|j_1|/GCD(|i_1|, |j_1|) > C = e^{10^3}$, we have $|A|\geq L^{0.499}$.
\end{lemma}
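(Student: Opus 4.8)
The plan is to reduce this to the two–variable situation already settled in Corollary~\ref{cor:two_var} (and the resulting worst–case bound, Corollary~\ref{cor:no_solutions}). The observation driving the reduction is that, by hypothesis, the only non–zero triples $(i,j,k)$ in the box $\{-L,\dots,L\}^3$ solving $ia+jb+ck=0$ are the multiples $m(i_1,j_1,k_1)$ with $m\neq 0$; so $A-A$ will contain no such bad triple as soon as, for every $m\neq 0$, the two numbers $mi_1$ and $mj_1$ do not both lie in $A-A$. (We are free to ignore the third coordinate $k_1$ entirely, which only makes the task harder, hence is safe.) The condition ``$mi_1$ and $mj_1$ are never simultaneously in $A-A$'' is, up to cancelling $g=\gcd(|i_1|,|j_1|)$, exactly the statement that $A$ has no non–trivial solution to a symmetric equation in two variables with coprime coefficients — which is precisely what Corollary~\ref{cor:two_var} delivers, and crucially it does so inside any prescribed range $[L]$.

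So, first I would set $g=\gcd(|i_1|,|j_1|)$, $p'=|i_1|/g$, $q'=|j_1|/g$, noting that $p'<q'$ are coprime and recording the identity $q'|i_1|=gp'q'=p'|j_1|$. (If $i_1=0$ then $j_1,k_1\neq 0$ and one simply renames the indices so that the distinguished pair consists of the two non–zero components of the basic solution; the rest of the argument is identical with $a,b,c$ permuted.) Then I would invoke Corollary~\ref{cor:two_var} with $N=L$ and with $(p',q')$ in the roles of $(a,b)$, obtaining a set $A\subseteq[L]$ with $|A|\ge L^{r}$, where $r=\frac{\log q'}{\log(1+(p'+q')(q'-1))}$, having no non–trivial solution to $p'x_1+q'x_2=p'x_1'+q'x_2'$. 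Written in terms of differences $u=x_1-x_1'$, $v=x_2-x_2'$ (and permuting the two terms of the symmetric equation, which is harmless), this says: there is no pair $u,v\in A-A$, not both zero, with $q'u+p'v=0$.

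It then remains to verify that this $A$ works. If $(i,j,k)\in(A-A)^3$ were a non–zero solution of $ia+jb+ck=0$, then since $A\subseteq[L]$ forces $i,j,k\in\{-L,\dots,L\}$, the hypothesis gives $(i,j,k)=m(i_1,j_1,k_1)$ with $m\neq 0$, so $i=mi_1$ and $j=mj_1$. Writing $i_1=\sigma_1|i_1|$, $j_1=\sigma_2|j_1|$ with signs $\sigma_1,\sigma_2\in\{\pm 1\}$, the recorded identity becomes $q'i_1=\sigma_1\sigma_2\,p'j_1$, and multiplying by $m$ gives $q'i+p'(-\sigma_1\sigma_2 j)=0$. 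Since $A-A$ is symmetric, $u:=i$ and $v:=-\sigma_1\sigma_2 j$ both lie in $A-A$, and $(u,v)\neq(0,0)$ because $i=mi_1\neq 0$ — contradicting the property of $A$. For the size bounds: the exponent $r$ is minimised over all coprime pairs $1\le p'<q'$ at $(p',q')=(5,6)$, with value $\frac{\log 6}{\log 56}>0.44$ (this is exactly the computation in Corollary~\ref{cor:no_solutions}), so $|A|\ge L^{0.44}$; and if $q'=|j_1|/\gcd(|i_1|,|j_1|)>e^{10^3}$, then $r\ge \frac12-\frac1{\log q'}>0.499$ by the estimate in Corollary~\ref{cor:two_var}, giving $|A|\ge L^{0.499}$.

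I do not expect a real obstacle here: the two substantive points are (i) recognising that forbidding the single fixed linear relation in $A-A$ is literally the assertion ``$A$ has no non–trivial solution of a $2$–variable symmetric equation'', and (ii) that $L$ is not ours to choose — it is dictated by how far $ia+jb+ck=0$ remains essentially unique — so we genuinely need a construction valid for every $L$, which is exactly why Corollary~\ref{cor:two_var} (and not Ruzsa's range–restricted construction, valid only for $N>(a+b)^3$) is the right tool. The remaining work, i.e. the sign bookkeeping and cancelling the gcd, is routine.
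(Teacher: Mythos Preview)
Your proof is correct and follows essentially the same route as the paper: construct $A\subseteq[L]$ via Corollary~\ref{cor:two_var}/Corollary~\ref{cor:no_solutions} so that $A$ has no non-trivial solutions to the two-variable symmetric equation with coefficients $i_1,j_1$ (equivalently $p',q'$), and then observe that any non-zero solution $(i,j,k)=t(i_1,j_1,k_1)$ in $(A-A)^3$ would force a forbidden relation between two elements of $A-A$. Your explicit reduction to the coprime pair $p'=|i_1|/g$, $q'=|j_1|/g$ and your sign bookkeeping are in fact slightly more careful than the paper's write-up, which applies Corollary~\ref{cor:no_solutions} directly to $i_1,j_1$ without isolating the gcd and contains a small index swap in the final contradiction; your version is cleaner on both points.
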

\begin{proof}
    Without loss of generality suppose that $(i_1, j_1, k_1)$ is the smallest solution, i.e. there does not exist an integer $d>1$ for which $d|i_1, d|j_1, d|k_1$. By Corollary \ref{cor:no_solutions} there exists subset $A$ of size $|A|\geq L^{\log 6/\log 56}\geq L^{0.44}$ ($|A|\geq L^{0.499}$ if $|j_1|/GCD(|i_1|, |j_1|) > C = e^{10^3}$) with no solutions to the equation
    $$i_1 x_1 + j_1 x_2 = i_1 x_1' + j_1 x_2'.$$
    Thus for any elements $z_1 = x_1-x_1'\in A-A, z_2=x_2'-x_2\in A-A$, where at least one is not zero, we have
    $$i_1 z_1 \neq j_1 z_2.$$
    If we had a solution to $ia+jb+ck=0$ in $A-A\subseteq \{-L, \cdots, L\}$ it would be of the form $t(i_1, j_1, k_1)$ for some rational number $t$. Thus $t i_1,t j_1\in A-A$. Trivially
    $$j_1 (ti_1) = i_1(tj_1),$$
    which is a contradiction with above inequality considering $z_1=ti_1, z_2=tj_1$.
\end{proof}
We extract part of the proof to the lemma below. The idea here is that there is at most one "small" linear combination of $a,b,c$ we need to avoid. It would be interesting to see if the exponent in Lemma~\ref{lem:small_large} can be improved from $1-\alpha_2$ to something larger.
\begin{lemma} \label{lem:small_large}
    Let $a,b,c$ be positive integers, with $a\leq b\leq c$ and with $b, c$ coprime. Suppose that for a real number $\alpha_2\in (0,1/2)$ there exist $i_1, j_1, k_1$, not all zero, with  $|i_1|,|j_1|,|k_1|\leq b^{\alpha_2}$  such that
    $$i_1a+j_1b+k_1c=0.$$
    If integers $i_2, j_2, k_2$ are not of the form $t i_1, t j_1, t k_1$ for any $t$ and $i_2a + j_2b + k_2 c =0$ then
    $$\max(|i_2|, |j_2|, |k_2|) \geq \frac{1}{2}b^{1-\alpha_2}.$$
\end{lemma}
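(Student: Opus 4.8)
The plan is to argue that two essentially different small solutions of $ia+jb+ck=0$ force $b$ and $c$ to share a common factor, contradicting coprimality. First I would set up the vector-space picture: the integer solutions $(i,j,k)$ of the single linear equation $ia+jb+ck=0$ form a rank-$2$ sublattice $\Lambda$ of $\mathbb{Z}^3$. The hypotheses give us two vectors in $\Lambda$: $v_1=(i_1,j_1,k_1)$ with all coordinates bounded by $b^{\alpha_2}$, and $v_2=(i_2,j_2,k_2)$ which is not a rational multiple of $v_1$, hence $v_1,v_2$ are linearly independent and span a finite-index subvector-space of $\Lambda\otimes\mathbb{Q}$ — in fact they span all of it since $\Lambda$ has rank $2$.

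The key step is to extract an arithmetic relation by eliminating $a$. From $i_1 a + j_1 b + k_1 c = 0$ and $i_2 a + j_2 b + k_2 c = 0$, forming $i_2 v_1 - i_1 v_2$ kills the $a$-coordinate and yields
$$(i_2 j_1 - i_1 j_2)\, b + (i_2 k_1 - i_1 k_2)\, c = 0.$$
Write $P = i_2 j_1 - i_1 j_2$ and $Q = i_2 k_1 - i_1 k_2$. Then $Pb = -Qc$. Since $\gcd(b,c)=1$, we get $c \mid P$ and $b \mid Q$. Now I need $P \neq 0$: if $P=0$ and (by a symmetric elimination of, say, $b$) the analogous quantity vanished too, then $v_2$ would be proportional to $v_1$, which is excluded; more carefully, $(P,Q)=(0,0)$ would force $i_2 v_1 = i_1 v_2$, and together with the non-degeneracy one checks $v_1\parallel v_2$, a contradiction — so at least one of $P,Q$ is nonzero, and since $Pb=-Qc$ with $b,c>0$ they're both nonzero. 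Hence $b \le |Q| = |i_2 k_1 - i_1 k_2| \le |i_2|\,|k_1| + |i_1|\,|k_2| \le 2 b^{\alpha_2} \max(|i_2|,|k_2|) \le 2 b^{\alpha_2}\max(|i_2|,|j_2|,|k_2|)$, which rearranges to $\max(|i_2|,|j_2|,|k_2|) \ge \tfrac12 b^{1-\alpha_2}$, as desired.

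The main obstacle I expect is the non-vanishing argument for $P$ (equivalently, ruling out $i_1 = 0$ or other degenerate coordinate configurations): when $i_1=0$ the elimination of $a$ is free but one must instead argue directly, and when some coordinates of $v_2$ are zero the chain of inequalities needs the coprimality of $b,c$ invoked against the right pair. One clean way to handle all cases uniformly is to observe that $\Lambda$ has rank $2$, pick a basis, and note that any solution with small $a$-coordinate — in particular, any solution at all, after eliminating $a$ — lives in the rank-$1$ lattice of $(b,c)$-relations, which is generated by $(c,-b)/\gcd(c,b) = (c,-b)$; so every solution $(i,j,k)$ satisfies $(j,k) \equiv$ a multiple of $(c,-b)$ modulo the image of the "$a$-direction", forcing $b \mid (i_2 k_1 - i_1 k_2)$ directly. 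I would present the elimination computation as the spine and then dispatch the degenerate coordinate cases (which are short) at the end.
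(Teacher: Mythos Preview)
Your proposal is correct and follows essentially the same route as the paper: eliminate $a$ to obtain $(i_2 j_1 - i_1 j_2)b + (i_2 k_1 - i_1 k_2)c = 0$, use $\gcd(b,c)=1$ to force $b \mid (i_2 k_1 - i_1 k_2)$, rule out the vanishing case via the non-proportionality hypothesis, and finish with the triangle inequality. The paper dresses the elimination step in vector-space language (the orthogonal complement of $(a,b,c)$ intersected with $\{i=0\}$ is one-dimensional and contains $(0,c,-b)$), but the content is identical; your direct linear-combination $i_2 v_1 - i_1 v_2$ is arguably cleaner. The degenerate case $i_1=0$ that you flag is exactly the one the paper dispatches first: $i_1=0$ forces $j_1 b = -k_1 c$, whence $b\mid k_1$, and $|k_1|\le b^{\alpha_2}<b$ gives $k_1=0$ and then $j_1=0$, contradicting $(i_1,j_1,k_1)\neq 0$.
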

\begin{proof}
We have $i_1\neq 0$ because otherwise $j_1 b = k_1 c$, which means that $b|k_1$, but $|k_1|<b$ so $k_1=0$. It follows that $i_1=j_1=k_1=0$, which is not the case.
Let $V$ be the subspace of $\mathbb{R}^3$ perpendicular to $(a,b,c).$ Clearly $\dim V = 2$. Notice that $(i_1,j_1,k_1)\in V$ and it is not perpendicular to $(1,0,0)$. Therefore the subspace of $V$ perpendicular to $(1,0,0)$ has dimension 1. On the other hand $$
(0, i_2 j_1 - i_1 j_2, i_2 k_1 - i_1 k_2),(0,c,-b)\in V$$ and both are perpendicular to $(1,0,0)$, thus they must be collinear. Therefore 
$$c(i_2 k_1 - i_1 k_2) = -b(i_2 j_1 - i_1 j_2)$$
and since $b$ and $c$ are coprime, $c|i_2 j_1 - i_1 j_2$ and $b|i_2 k_1 - i_1 k_2$. If both numbers are $0$, then $(i_2,j_2,k_2)=t(i_1,j_1,k_1)$, which is a contradiction. Therefore, at least one of the two holds:
$$|i_2 j_1 - i_1 j_2| \geq c \geq b,$$
$$|i_2 k_1 - i_1 k_2| \geq b.$$
If $|i_2|\geq b^{1-\alpha_2}/2$ the proof is finished, so let us assume the opposite. If the first of the two inequalities is true we have by the triangle inequality
$$|i_1j_2|\geq b - |i_2 j_1|\geq b - |j_1|b^{1-\alpha_2}/2 \geq b/2$$
and therefore
$$|j_2| \geq \frac{b}{2|i_1|}\geq \frac{1}{2}b^{1-\alpha_2}.$$
In case of the other inequality being true the reasoning is exactly the same.
\end{proof}
We will consider the equation 
$$ax+by+cz=ax'+by'+cz',$$
where $0<a\leq b\leq c$. In the next theorem, one key assumption is $c\leq b^{1.01}$. Notice that if $c > b^3$, then a simple construction can be given. We choose $A_L\subseteq [\lfloor c^{2/3}/2\rfloor]$ to not contain solutions to $ax+by=ax'+by'$ (using Corollary~\ref{cor:two_var}). Then if $x,y,x',y'\in A_L$ we have $|ax+by-ax'-by'|<c$ so this set also does not contain solutions to the equation $ax+by+cz=ax'+by'+cz'$. Choosing $L=3c$ we have $\frac{\log |A_L|}{\log L}\geq \frac{\frac{1}{3}(\log c)-o_b(1)}{\log 3 + \log c}\geq \frac{1}{3} - o_b(1)$. We do not know what happens if $c\in (b^{1.01}, b^3)$, however if $b$ and $c$ are coprime we know how to approach the case $c\leq b^{1.01}$. 
Equipped with Lemma~\ref{lem:small_large} we can prove Theorem~\ref{thm:three}.

\begin{proof}
We fix a real number $\alpha\in (0,1/2)$ for which we will optimize later. 
Because $c\neq a+b$ our equation is primitive. 
Consider the equation $$i a + j b + k c = 0.$$ If there are no solutions to this equation with $|i|,|j|,|k| \leq b^{\alpha}$ we consider the set $[b^{\alpha}]\subseteq [(a+b+c) b^{\alpha}]$ as $A_0$. It works for us since all the elements of $A_0-A_0 = [-b^{\alpha}, b^{\alpha}]$ have absolute value at most $b^{\alpha}$, so it is impossible to choose $i$, $j$, $k$ from $A_0-A_0$.
If we then choose the digit base to be $(a+b+c) b^{\alpha}< 4c b^{\alpha}\leq b^{\log_b(c) + \alpha + o_b(1)}$, by Lemma~\ref{lem:base} we can choose $A\subseteq [N]$ of size $\Omega(N^{\frac{\alpha}{\log_b(c) + \alpha + o_b(1)}})$.

Otherwise let $i_1,j_1,k_1$ be one solution with $|i|,|j|,|k| \leq b^{\alpha_2}$ ($\alpha_2$ will be later either $\alpha$ or something smaller). We assume that $GCD(|i_1|, |j_1|, |k_1|)=1$ because if that was not the case we could divide each number by the common divisor. By Lemma~\ref{lem:small_large} if there exist a solution $i_2, j_2, k_2$, which is not a multiple of this one we have $\max(|i_2|, |j_2|, |k_2|)\geq b^{1-\alpha_2}/2$.

Let us restrict our attention to the set $(-b^{1-\alpha_2}/2, b^{1-\alpha_2}/2)$. By the above argument, the only solutions to the equation $ia + jb + kc = 0$ in this set are of the from $t'(i_1, j_1, k_1)$. Since the equation is primitive not all of $|i_1|,|j_1|,|k_1|$ are the same. For example it could be that $|i_1|<|j_1|$. Apply Lemma \ref{lem:one_solution} and deduce that there exists a subset $A_0$ of $[b^{1-\alpha_2}/2]$ of size at least $b^{0.44(1-\alpha_2)}/2$, such that $A_0-A_0$ never contains both $i_1x$ and $j_1x$ for a given integer $x$. Thus we managed to avoid all possible solutions.
The constant $0.44$ does not always satisfy us, so we will improve upon it by considering the following two cases.

Suppose that $\max(|i_1|,|j_1|,|k_1|) > C = e^{10^6}.$
If say, $|i_1|$ is the largest, because $GCD(|i_1|, |j_1|, |k_1|)=1$ we have $|i_1|/GCD(|i_1|,|j_1|)\geq e^{10^3}$ or $|i_1|/GCD(|k_1|,|i_1|)\geq e^{10^3}$. Then take $\alpha_2=\alpha$ and by Lemma \ref{lem:one_solution} we have $|A_0|\geq b^{0.499(1-\alpha)}/2$. The same holds, analogously if $|j_1|$ or $|k_1|$ are the largest.

If on the other hand $|i_1|, |j_1|, |k_1|\leq C$ then for large enough $b$ we have for $\alpha_2=0.1$ that $|i_1|, |j_1|, |k_1|\leq C \leq b^{\alpha_2}$ and so by Lemma~\ref{lem:one_solution} we can construct $A_0$ with $|A_0|\geq b^{0.44(1-\alpha_2)}/2 \geq b^{0.39}$.

We have constructed $A_0\subseteq [b^{1-\alpha_2}/2]$ to have no non-trivial solutions to the equation $ax+by+cz=ax'+by'+cz'$. Considering digit base $(a+b+c)b^{1-\alpha_2} < 4cb^{1-\alpha_2}$ in Lemma~\ref{lem:base}, for any $N$ we can construct set $A\subseteq [N]$ with $|A|=\Omega(N^{\frac{0.499(1-\alpha)}{\log_b(c) + 1 - \alpha + o_b(1)}})$ in the first case and $|A|=\Omega(N^{\frac{0.39}{\log_b(c) + o_b(1) +0.39}})$ in the second case.

We have shown three constructions of $A$. Depending on the nature of the equation and the constant $\alpha$ we always select one of them to get $A$. We now wish to optimize the size of $A$ by choosing suitable $\alpha$. From the above considerations we know that
$$|A|\geq c \min\Bigl(N^{\frac{\alpha}{\log_b(c)+\alpha+o_b(1)}}, N^{\frac{0.499(1-\alpha)}{\log_b(c)+1-\alpha+o_b(1)}}, N^{\frac{0.39}{\log_b(c)+0.39+o_b(1)}}\Bigr),$$
for some small absolute constant $c$. To find the optimal value of $\alpha$ we solve the corresponding quadratic equation
$$\alpha (1+\log_b(c)-\alpha) = 0.499 (1-\alpha)(\log_b(c)+\alpha).$$
The only solution in the interval $(0,1)$ is
$$\alpha  = \frac{-1 + q - \beta - q \beta +  \sqrt{4 (-1 + q) q \beta + (1 + q (-1 + \beta) + \beta)^2}}{2 (q - 1)},$$
where $q=0.499$ and $\beta=\log_b(c).$ For such $\alpha$ we have $|A|=\Omega \Bigl( N^{\frac{\alpha}{\beta + \alpha + o_b(1)}} \Bigr)$. To give some concrete values, if $c=b$ we have $|A| =\Omega( N^{1/4.74})$ and when $c=b^{1.01}$ we have $|A| = \Omega( N^{1/4.77})$. On the other hand if $c=b^{1.1}$ we get $|A|=\Omega( N^{1/5.03})$, which is worse than the trivial bound.

\end{proof}
Even though our technique gives promising results for a wide class of equations, we are still unable to answer the question of Zhao~\cite{100problems}, who asks for a large set with no non-trivial solutions to the non-primitive equation $x+2y+3z=x'+2y'+3z'$. 
\section{Non-symmetric equations}
The case of non-symmetric equations appears to be more difficult. The result of Ruzsa shows that if an equation has genus $g$ then any set without solutions must have size $O(N^{1/g})$, however not much more is known. Our technique does not seem to work well for the non-symmetric case but there are some remarks we can make about very specific equations.

Prendiville~\cite{prend} showed a bound on subsets of Sidon sets without solutions to invariant linear equations of length at least 5. It was later boosted by a result of the author~\cite{kosc}. We can interpret this result as solving the equation
$$a_1 x_1 + a_2 x_2 + \cdots + a_k x_k + ay = az + aw,$$
where $k\geq 4$ and $a = a_1 + a_2 + \cdots + a_k.$ The bound we obtain for this equation of genus 2 is stronger than $O(N^{1/2})$.

If a set $A$ is not Sidon then there exist numbers $x,y,z,w\in A$ with $x\neq z$ and $x\neq w$ such that $x+y=z+w$. So our equation has a non-trivial solution $x_i=x$ and $y,z,w$. If $A$ is Sidon and $|A|\gg N^{1/2}\exp(-C(\log\log)^{1/7})$, then by Prendiville's result, there exists a non-trivial solution to the equation $a_1x_1 + a_2x_2 + \cdots a_k x_k - az = 0$ in $A$. Taking $y=w$ we have a non-trivial solution to our equation. Thus $|A|\ll N^{1/2}\exp(-C(\log\log)^{1/7})$.\\

Another equation we will consider is
$$x_1+x_2+dx_3+dx_4=2y_1+2dy_2.$$
For any integer $d\geq 1$, we are able to show that if $A$ contains no non-trivial solutions to this equation, then $|A|\ll e^{-c(\log N)^{1/9}} N^{1/2}$. Moreover, we can give an example of such set for large $d$, with $|A|\gg N^{1/2-C/\sqrt{\log d}}$.

Clearly $A$ cannot contain $x,y,w,z$ with $x\neq w$ and $x\neq z$ such that $x+dy=w+dz$ because this would be a non-trivial solution to our equation. We proceed to estimate the size of the set
$$A\dot{+}dA = \{a_1+da_2 : a_1,a_2\in A,a_1\neq a_2\}.$$
It cannot contain any arithmetic progression on length 3 because then we have $x_1,x_2,x_3,x_4,y_1,y_2\in A$ with $$(x_1+dx_2)+(x_3+dx_4)=2(y_1+dy_2)$$ and $x_1+dx_2<y_1+dy_2<x_3+dx_4$. It cannot be a trivial solution because if $x_1=x_3=y_1$ and $x_2=x_4=y_2$ the inequality is violated. On the other hand when $d=1$ then $x_1=x_2=y_1$ and $x_3=x_4=y_2$ is impossible as $x_1+dx_2\in A\dot{+}dA$ and this is the only representation. Therefore by the Kelley-Meka(\cite{km},~\cite{km_impr}) bound
$$|A|^2-|A| = |A\dot{+}dA|\leq e^{-c(\log N)^{1/9}} N.$$
As for the construction, we let $m = \lfloor (d-1)/2 \rfloor$. By Behrend's construction, we can find $A_L \subseteq\{0,1,\cdots, m\}$ such that it contains no non-trivial arithmetic progressions of length 3 with $|A_L|\gg m\exp(-C(\log m)^{1/2})m$. Let $L=(4m+3)m+1$. There are no solutions to our equations in $A_L$ because we must have $x_1+x_2=2y_1$ and $x_3+x_4=2y_2$ and both of them must be trivial progressions by the construction of $A_L$. By Lemma~\ref{lem:base} we can obtain $|A|\gg N^r$, where $$r=\frac{\log |A_L|}{\log L}\geq \frac{\log m - C(\log m)^{1/2}}{4 + 2\log m}\geq \frac{1}{2} - \frac{C}{\log^{1/2} m}.$$
Notice that in the argument any convex equation could be taken instead of the equation for 3-term arithmetic progressions. We could also split each variable into more that two terms by introducing higher powers of $d$, thus giving upper and lower bounds for a larger class of equations.

We conclude with a simple observation based on Lemma~\ref{lem:base}. If for a symmetric equation
$$a_1x_1 + a_2x_2+\cdots a_kx_k = a_1x_1' + a_2x_2'+\cdots a_kx_k',$$ where $a_1,a_2,\cdots, a_k>0$
there is a set $A_L\subseteq [L]$ with $(a_1+a_2+\cdots+a_k)\max_{x\in A_L} <L$, then for any integers $i_1,i_2,\cdots, i_k, j_1,j_2, \cdots, j_k$ the set $A_L$ does not contain any solutions to the equation
$$(i_1 L +a_1)x_1 + \cdots +(i_k L + a_k)x_k = (j_1L + a_1)x_1' +\cdots +(j_k L + a_k)x_k'.$$
This is of course because any solution would be a solution modulo $L$, which does not exist by our construction. If now $s=i_1+i_2+\cdots+ i_k+ j_1+j_2+ \cdots+ j_k$ we are able to construct $|A|=\Omega(N^r)$, where $r=\frac{\log|A_L|}{\log L + \log s}$.

\textsc{Faculty of Mathematics and Computer Science, Adam Mickiewicz University, Umultowska 87, 61-614 Poznan, Poland }

\textit{Email address:} tomasz.kosciuszko@amu.edu.pl
\end{document}